\documentclass[12pt, reqno]{amsart}

\usepackage[utf8]{inputenc}
\usepackage[T1]{fontenc}
\usepackage{tikz,tikz-3dplot}
\usepackage{amsmath,amsfonts,amssymb,amsmath,latexsym,mathrsfs}
\usepackage{shuffle}
\usepackage[all]{xy}
\usepackage{stackengine}
 
\usepackage{enumerate}

\usepackage{hyperref}
 
\usepackage{tabu}
\usepackage{tikz-cd} 

\usepackage{comment}
\usepackage{url}
\usepackage{tikz}

\usepackage{xfrac}

\newtheorem{theorem}{Theorem}[section]

\newtheorem{proposition}[theorem]{Proposition}

\newtheorem{example}[theorem]{Example}

\begin{document}

\title{Quandles with isomorphic representation categories}
\keywords{Quandles, enveloping groups, representations, categories}

\author{Mohamad \textsc{Maassarani} }
\maketitle
\begin{abstract} We give two types of examples where two non isomorphic finite quandles have isomorphic enveloping groups. This implies that two non-isomorphic finite quandles can share isomorphic representation categories (as additive and tensor categories simultaneously).
\end{abstract}
\section*{Introduction and main results}
A quandle is a set $Q$ equipped with a binary operation $\triangleright: Q\times Q \to Q$ satisfying some axioms. To a group one can attach functorially a conjugacy quandle. A quandle representation of a quandle $Q$ over a vector space $V$ is a quandle morphism from $Q$ to the conjugacy quandle of $GL(V)$, i.e. $\rho(x\triangleright y)=\rho(x)\rho(y)\rho(x)^{-1}$. One can attach to each quandle $Q$ an enveloping group $G(Q)$ and a universal map $Q\to G(Q)$. The categories of finite dimensional representaitons of $Q$ and $G(Q)$ (group representations) are isomorphic as additive and tensor categories simultaniously. The definitions of objects and properties we mentioned are given in section $1$ of these notes.\\\\
To an non injective quandle $Q$ ($Q\to G(Q)$ not injective) one can attach a smaller injective quandle $Q_{conj}$. $Q$ and $Q_{conj}$ share isomorphic enveloping groups (\cite{GQ}) and hence have isomorphic representation categories. One can wonder wether we can have non isomorphic finite injective quandles with isomorphic enveloping groups and hence isomorphic representation categories. We give a positive answer using the known notion of a dual of a quandle. This is done in section $2$.\\\\
\textbf{AI tools :} The part concerning dual quandles was obtained using Google's AI mode and the free version of claude (Claude Sonnet 5.5). The information was checked by the author.
\section{Definitions an properties}
\subsection{Quandle}
A quandle is a set $Q$ equipped with a binary operation $\triangleright$  such that :
\begin{itemize}
\item[-] $x\triangleright x=x$ for $x\in Q$.
\item[-] For all $x,y \in Q$ there exist a unique $z\in Q$, such that $x \triangleright z=y$.
\item[-] $x \triangleright( y\triangleright z)= (x \triangleright y) \triangleright (x \triangleright z)$, for $x,y,z \in Q$. 
\end{itemize}
The conjugacy quandle $Conj(G)$ of a group $G$ is the set $G$ equipped with the binary operation defined by $x\triangleright y= xyx^{-1}$  for $x,y\in G$.\\\\  
A map between two quandles $(Q_1,\triangleright_1)$ and $(Q_2 ,\triangleright_2)$ satisfying : $$f(z \triangleright_1 w)=f(z) \triangleright_2 f(w),$$
for $z,w \in Q$ is called a quandle mophism. It is an isomorphism if $f$ is a bijection. Composition of quandle morphisms gives quandle morphisms. A group morhism is a quandle morphism if the groups are considered as conjugacy quandles.
 
\subsection{Quandle representation}
Folowing \cite{rep} a representation of a quandle $Q$ is a vector space $V$ and a quandle morphism $\rho: Q \to Conj(\mathrm{GL}(V))$, i.e. : 
$$\rho(x\triangleright y)=\rho(x) \rho(y) \rho(x)^{-1},$$
for all $x,y \in Q$. We define intertwining operators as for group representations.
\subsection{Enveloping group}
The enveloping group $G(Q)$ of a quandle $Q$ is the group :
$$G(Q)=\langle e_x \in Q \vert e_xe_ye_x^{-1}=e_{x\triangleright y} , \text{ for } x,y\in Q\rangle.$$
The map $\varphi_Q : Q \to G(Q)$ assigning to $x\in Q$ the corresponding generator is a quandle morphism for the conjugcay quandle structure on $G(Q)$. For any quandle morphism $f:Q\to Conj(G)$ there is a unique group morphism $\tilde{f}:G(Q)\to G$ such that $f=\tilde{f}\circ\varphi_Q$.
 \subsection{The category of quandle representations}
For $Q$ a quandle we will denote by $Rep(Q)$ the category whose object are finite dimensional quandle representations of $Q$ and whose morphisms are intertwinning operators. We will denote by $Rep(G(Q))$ the categorie of finite dimensional representations of $G(Q)$ with morphisms intertwinning operators. The category $Rep(G(Q))$ is naturally an additive category and a tensor category. Indeed, one can form as for groups the direct sum of two quandle representations and the tensor product (using the diagonal $Q\to Q\times Q$) of two quandle representations. The category $Rep(Q)$ is an additive and a tensor category.\\\\
Let $\rho : Q \to GL(V)$ be a quandle representation, by the universal property of the map $\varphi_Q$ from $Q$ to the envelopping group $G(Q)$ there is a unique group representation $\rho_{G(Q)}: G(Q)\to GL(V)$ such that $\rho_{G(Q)}=\rho \circ \varphi_Q$.
\begin{proposition}
\item[1)] The assignement $\rho \mapsto \rho_{G(Q)}$ is a $1$ to $1$ correspondance between quandle representations of $Q$ and group representations of $G(Q)$.
\item[2)] If $h$ is an intertwinning operator between two quandle representations of $Q$ it is an intertwinning operater between the corresponding representations of $G(Q)$.
\item[3)] The assignement $Hom_{Rep(Q)}(\rho,\rho') \to Hom_{Rep(G(Q))}(\rho_{G(Q)},\rho'_{G(Q)})$ in $2)$ is bijective. 
\end{proposition}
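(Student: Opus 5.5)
The plan is to lean entirely on the universal property of $\varphi_Q : Q \to G(Q)$ recalled in the subsection on the enveloping group, together with the fact that $G(Q)$ is generated by the elements $e_x=\varphi_Q(x)$, $x\in Q$. I read the defining relation of $\rho_{G(Q)}$ in the intended direction, namely $\rho=\rho_{G(Q)}\circ\varphi_Q$.

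For 1), I will write down an explicit inverse. Given a group representation $\sigma : G(Q)\to GL(V)$, the composite $\sigma\circ\varphi_Q$ is a quandle morphism $Q\to Conj(GL(V))$, since $\varphi_Q$ is a quandle morphism into $Conj(G(Q))$ and group morphisms are quandle morphisms between conjugacy quandles. This gives a map $\sigma\mapsto\sigma\circ\varphi_Q$ from group representations of $G(Q)$ to quandle representations of $Q$. Two checks then finish 1).
\begin{itemize}
\item[-] By construction, $(\rho_{G(Q)})\circ\varphi_Q=\rho$.
\item[-] The extension of $\sigma\circ\varphi_Q$ is $\sigma$ itself. This follows from the uniqueness part of the universal property, or directly from the fact that two group morphisms agreeing on the generators $e_x$ coincide.
\end{itemize}

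For 2), let $h:V\to V'$ satisfy $h\rho(x)=\rho'(x)h$ for all $x\in Q$. I will consider the set
$$H=\{g\in G(Q) \mid h\,\rho_{G(Q)}(g)=\rho'_{G(Q)}(g)\,h\}.$$
This set is a subgroup of $G(Q)$. It contains $1$, and it is closed under products because $h\rho(g)\rho(g')=\rho'(g)h\rho(g')=\rho'(g)\rho'(g')h$. It is closed under inverses because multiplying $h\rho(g)=\rho'(g)h$ on both sides by the inverses gives $\rho'(g)^{-1}h=h\rho(g)^{-1}$. Since $H$ contains every generator $e_x$, we get $H=G(Q)$, so $h$ intertwines the group representations.

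For 3), the assignment in 2) is the identity on the underlying linear maps $h\in Hom(V,V')$, so it is injective. For surjectivity, let $h$ intertwine $\rho_{G(Q)}$ and $\rho'_{G(Q)}$. Restricting the intertwining relation to the elements $e_x$ and using $\rho=\rho_{G(Q)}\circ\varphi_Q$ shows that $h$ intertwines $\rho$ and $\rho'$. It is therefore a preimage of itself.

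No step is a genuine obstacle. The only point needing care is the closure of $H$ under inverses in 2), because $G(Q)$ is generated by the $e_x$ only as a group, not as a monoid.
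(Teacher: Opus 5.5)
Your proof is correct and complete. The two-sided inverse $\sigma\mapsto\sigma\circ\varphi_Q$ settles 1). The subgroup argument settles 2); its inverse-closure step is genuinely needed, since $G(Q)$ is only group-generated by the $e_x$. Then 3) is immediate.

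The paper states this proposition without proof, treating it as a direct consequence of the universal property of $\varphi_Q$, so your argument is exactly the routine verification it leaves implicit. You also correctly read the paper's defining relation as $\rho=\rho_{G(Q)}\circ\varphi_Q$ rather than the misprinted $\rho_{G(Q)}=\rho\circ\varphi_Q$.
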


\begin{proposition}\label{Rep}
\item[1)] The assignement of the previous proposition $\rho \mapsto \rho_{G(Q)}$ and $h\mapsto h$ is a category isomorphism $F$ between $Rep(Q)$ and $Rep(G_Q)$. 
\item[2)] $F$ is an additive functor and a tensor functor.
\end{proposition}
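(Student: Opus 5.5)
We rely on the previous proposition throughout. Note that the relation there should read $\rho = \rho_{G(Q)}\circ\varphi_Q$, i.e. $\rho_{G(Q)}$ is the unique group morphism extending $\rho$. For part 1), we first check that $F$ is a functor. On objects $F(\rho)=\rho_{G(Q)}$. On morphisms $F(h)=h$, which is well defined by part 2) of the previous proposition. Since $F$ does not change the underlying linear maps, it clearly preserves identities and composition.

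To see that $F$ is an isomorphism of categories, we construct an inverse functor $F'$. It sends a group representation $\sigma: G(Q)\to GL(V)$ to $\sigma\circ\varphi_Q$, and a morphism $h$ to $h$. The composite $\sigma\circ\varphi_Q$ is a quandle representation because it is the composite of the quandle morphism $\varphi_Q$ with a group morphism, which is a quandle morphism for the conjugacy structures. An intertwiner $h$ of group representations intertwines in particular the images of the generators $e_x$, so $h$ intertwines the restricted quandle representations. Uniqueness in the universal property gives $(\sigma\circ\varphi_Q)_{G(Q)}=\sigma$. Directly from the definition, $\rho_{G(Q)}\circ\varphi_Q=\rho$. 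Hence $F'F$ and $FF'$ are identities on objects, and they are trivially identities on morphisms. Alternatively, one can cite parts 1) and 3) of the previous proposition: $F$ is bijective on objects and on each Hom-set, and such a functor is an isomorphism.

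For part 2), we must compare the direct sums and tensor products. Given $\rho$ on $V$ and $\rho'$ on $V'$, we have $(\rho\oplus\rho')(x)=\rho(x)\oplus\rho'(x)$ and $(\rho\otimes\rho')(x)=\rho(x)\otimes\rho'(x)$. The key step is the identities
$$(\rho\oplus\rho')_{G(Q)}=\rho_{G(Q)}\oplus\rho'_{G(Q)},\qquad (\rho\otimes\rho')_{G(Q)}=\rho_{G(Q)}\otimes\rho'_{G(Q)}.$$
Both right-hand sides are group morphisms $G(Q)\to GL(V\oplus V')$, respectively $GL(V\otimes V')$, since direct sum and tensor product of group representations are group representations. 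Precomposed with $\varphi_Q$, both agree with $\rho\oplus\rho'$, respectively $\rho\otimes\rho'$. The uniqueness part of the universal property then gives equality.

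It remains to check the structure maps. On morphisms, $F(h\oplus h')=h\oplus h'$ and $F(h\otimes h')=h\otimes h'$, and $F$ is additive on Hom-spaces because it is the identity on linear maps. The unit object is the trivial representation on $\mathbb{K}$, with $x\mapsto 1$; it is sent to the trivial group representation, again by uniqueness. The associativity, unit and symmetry constraints are the usual linear isomorphisms of vector spaces in both categories, and $F$ does not change linear maps. So $F$ is a strict tensor functor, with identity coherence maps.

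The main obstacle is not mathematical depth but bookkeeping. We must make explicit that $Rep(Q)$ is additive and monoidal with the pointwise structures, and check that the constraint isomorphisms are intertwiners; otherwise the comparison is not even meaningful. Every remaining equality reduces to the uniqueness clause of the universal property of $\varphi_Q$.
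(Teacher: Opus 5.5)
Your proof is correct and follows the route the paper intends. The paper states this proposition without proof, leaving it as an immediate consequence of the preceding proposition (bijectivity on objects and on Hom-sets) together with the uniqueness clause of the universal property of $\varphi_Q$, which is exactly what you use for the isomorphism and for compatibility with $\oplus$, $\otimes$ and the unit; your correction of the typo to $\rho=\rho_{G(Q)}\circ\varphi_Q$ is also right.
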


\section{Quandles with isomorphic enveloping groups}
 
A quandle $Q$ is called injective if $\varphi(Q):Q\to G(Q)$ is injective. For a non injective quandle $Q$ the image of $Q$ by $\varphi_Q$ is an injective subquandle $Q_{Conj}$ of $Conj(G(Q))$ (\cite{GQ}).  One can find an exemple of a non injective quandle on $3$ elements in loc. cit..

\begin{proposition}
Let $Q$ be a non injective finite quandle. 
\item[1)] The quandles $Q$ and $Q_{Conj}$ are not isomorphic.
\item[2)] We have a category isomorphism between $Rep(Q)$ and $Rep(Q_{conj})$. This isomorphism is an isomorphism of addive categories and tensor categories.
\end{proposition}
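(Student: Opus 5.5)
For part 1), I will argue by cardinality. Since $Q$ is finite and $\varphi_Q$ is not injective, the image $Q_{Conj}=\varphi_Q(Q)$ is a finite set with $|Q_{Conj}|<|Q|$. A quandle isomorphism is in particular a bijection of underlying sets, so no isomorphism $Q\to Q_{Conj}$ can exist. An alternative argument uses that $Q_{Conj}$ is injective (\cite{GQ}) while $Q$ is not, and that injectivity is invariant under isomorphism. The cardinality argument is shorter and needs nothing beyond finiteness, so I will use it.

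For part 2), I will invoke Proposition~\ref{Rep} for both quandles. This gives isomorphisms of additive and tensor categories $Rep(Q)\cong Rep(G(Q))$ and $Rep(Q_{Conj})\cong Rep(G(Q_{Conj}))$. It then suffices to produce a group isomorphism $\psi: G(Q)\to G(Q_{Conj})$. Pulling back along $\psi$ (with morphisms $h\mapsto h$) gives an isomorphism $Rep(G(Q_{Conj}))\to Rep(G(Q))$. This pullback is clearly strictly compatible with direct sums and tensor products, since both are defined pointwise on group elements. Composing the three isomorphisms yields the claim, because composites of additive and tensor functors are again additive and tensor functors.

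The main step is the group isomorphism $G(Q)\cong G(Q_{Conj})$, which is the result cited from \cite{GQ}. I will take it from there, but I can also sketch it directly via universal properties.

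Let $\pi:Q\to Q_{Conj}$ be the corestriction of $\varphi_Q$; it is a surjective quandle morphism. By the universal property of $\varphi_Q$, the morphism $\varphi_{Q_{Conj}}\circ\pi$ induces a group map $\alpha:G(Q)\to G(Q_{Conj})$ with $e_x\mapsto e_{\varphi_Q(x)}$.

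Conversely, the inclusion $\iota:Q_{Conj}\hookrightarrow Conj(G(Q))$ is a quandle morphism, so it induces $\beta:G(Q_{Conj})\to G(Q)$ with $e_{g}\mapsto g$ for $g\in Q_{Conj}$.

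Checking the composites on generators:
\begin{itemize}
\item $\beta\alpha(e_x)=\varphi_Q(x)=e_x$;
\item $\alpha\beta(e_{\varphi_Q(x)})=\alpha(e_x)=e_{\varphi_Q(x)}$, using surjectivity of $\pi$ to reach every generator of $G(Q_{Conj})$.
\end{itemize}
Hence $\alpha$ and $\beta$ are mutually inverse.

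The only care needed is to ensure that the tensor structure transported through these isomorphisms is the given one. Representations of $Q_{Conj}$ pulled back to $Q$ along $\pi$ match the correspondence, and the diagonal maps commute with $\pi$, so this holds.
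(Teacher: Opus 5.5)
Your proposal is correct and follows the paper's proof. Part 1) uses the cardinality bound $|Q_{Conj}|<|Q|$, and part 2) combines Proposition~\ref{Rep} for both quandles with the group isomorphism $G(Q)\cong G(Q_{Conj})$ from \cite{GQ}; your universal-property sketch of that isomorphism, via the mutually inverse maps $e_x\mapsto e_{\varphi_Q(x)}$ and $e_g\mapsto g$, is also sound and supplies the step the paper only cites.
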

\begin{proof}
The quandle $Q_{Conj}$ is not isomorphic to $Q$ since $\vert Q_{Conj} \vert < \vert Q \vert$, but $G(Q_{Conj})$ and $G(Q)$ are isomophic (\cite{GQ}). Hence, we have an isomorphism of (additive and tensor) categories between $Rep(G(Q))$ and $Rep(G(Q_{Conj}))$. $2)$ follows from proposition \ref{Rep}.
\end{proof}
 This gives a first type example of non isomorphic finite quandles with isomorphic representation categories and raises the question whether this could happen with two injective finite quandles. The answer is yes as we will show.\\\\
Let $Q$ be a quandle  with binary operation $\triangleright$. The dual quandle $Q^*$ of $Q$ is the set $Q$ equipped with the binary operation $\triangleright^{-1}$ defined by :
$$ x\triangleright^{-1}y=z \Longleftrightarrow x\triangleright z=y,$$
for $x,y \in Q$. 
\begin{proposition}
Let $Q$ be a quandle.
\item[1)] $G(Q)$ and $G(Q^*)$ are isomorphic.
\item[2)] $Rep(Q)$ and $Rep(Q^*)$ are isomorphic categories. The isomorphism is an isomorphism of additive and tensor categories.
\item[3)] $Q$ is injective if and only if $Q^*$ is injective.
\end{proposition}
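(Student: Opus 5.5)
The plan is to write down an explicit isomorphism $G(Q)\to G(Q^*)$ by sending each generator to its inverse. Write $f_y$ for the generators of $G(Q^*)$, with relations $f_xf_yf_x^{-1}=f_{x\triangleright^{-1}y}$. Define $\Phi:G(Q)\to G(Q^*)$ on generators by $\Phi(e_x)=f_x^{-1}$.

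To see that $\Phi$ is well defined, check the relation $e_xe_ye_x^{-1}=e_{x\triangleright y}$. Its image is $f_x^{-1}f_y^{-1}f_x$, and we need this to equal $f_{x\triangleright y}^{-1}$. Equivalently, we need $f_x^{-1}f_yf_x=f_{x\triangleright y}$.

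Put $z=x\triangleright y$. By definition of the dual operation, $x\triangleright^{-1}z=y$. The defining relation in $G(Q^*)$ then reads $f_xf_zf_x^{-1}=f_y$, that is, $f_z=f_x^{-1}f_yf_x$, which is exactly what we need.

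Symmetrically, define $\Psi(f_x)=e_x^{-1}$. This is well defined because $(Q^*)^*=Q$: the relation $x\triangleright^{-1}y=z$ is equivalent to $x\triangleright z=y$, so the same computation applies with the roles exchanged. The composites $\Psi\circ\Phi$ and $\Phi\circ\Psi$ fix all generators, so they are the identity, and $\Phi$ is an isomorphism. This proves 1).

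For 2), a group isomorphism $\Phi$ induces a pullback functor $Rep(G(Q^*))\to Rep(G(Q))$, given by $\rho\mapsto\rho\circ\Phi$ and $h\mapsto h$. This functor is an isomorphism of categories, with inverse given by pullback along $\Phi^{-1}$. It visibly commutes with direct sums and tensor products, since both are defined pointwise on the group. Composing with the isomorphisms of Proposition \ref{Rep} for $Q$ and for $Q^*$ gives the required additive and tensor isomorphism $Rep(Q)\cong Rep(Q^*)$.

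For 3), note that $\Phi\circ\varphi_Q=\iota\circ\varphi_{Q^*}$, where $\iota(g)=g^{-1}$ is inversion on $G(Q^*)$. Both sides send $x$ to $f_x^{-1}$, and the sets $Q$ and $Q^*$ coincide. Since $\Phi$ and $\iota$ are bijections, $\varphi_Q$ is injective if and only if $\varphi_{Q^*}$ is.

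The only step with real content is the verification of the relation in 1). The naive choice $e_x\mapsto f_x$ does not respect the relations, and the key point is to send generators to inverses; everything after that is formal.
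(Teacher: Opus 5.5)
Your proof is correct and matches the paper's argument: the same isomorphism $e_x\mapsto f_x^{-1}$ with inverse $f_x\mapsto e_x^{-1}$ obtained from $(Q^*)^*=Q$, part 2) via Proposition \ref{Rep}, and part 3) via the identity $\Phi\circ\varphi_Q(x)=\varphi_{Q^*}(x)^{-1}$. The only difference is that you spell out why pulling back along a group isomorphism gives an isomorphism of additive and tensor categories, which the paper leaves implicit.
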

\begin{proof}
We prove $1$. Recall that the enveloping groups of $Q$ and $Q*$ are the group :
$$G(Q)=\langle e_x \mid e_{x\triangleright y}=e_xe_ye_x^{-1}\rangle\quad \text{and}\quad  G(Q^*)=\langle f_x \mid f_{x\triangleright^{-1}y}=f_xf_yf_x^{-1}\rangle.$$
The fact that $z=x\triangleright^{-1}y$ means that $y=x\triangleright z$. Hence, we have the relation :
$$f_z=f_xf_{x\triangleright z} f_x^{-1}, $$
in $G(Q^*)$. Writing this relation differently, we get :
$$f_{x\triangleright z}^{-1}=f_x^{-1}f_{z}^{-1}f_x.$$
In $G(Q^*)$. The correspondance $e_x\mapsto f_x^{-1}$ maps the relation $e_{x\triangleright z}=e_xe_ze_x^{-1}$ onto $ f_{x\triangleright z}^{-1}=f_x^{-1}f_{z}^{-1}f_x$. Hence, we have a morphism $\theta : G(Q) \to G(Q^*)$ induced by the correpondance $e_x\mapsto f_x^{-1}$. Since $(Q^*)^*=Q$ we have a morphism $\theta^*:G(Q^*)\to G(Q)$ given by the correspondance $f_x\mapsto e_x^{-1}$. The maps $\theta$ and $\theta^*$ are inverse to each other and hence they are group isomorphism. This proves $1)$. $2)$ follows from $1)$ and proposition\ref{Rep}. To prove $3)$ notice that that the universal maps $\varphi_Q : Q \to G(Q)$ and $\varphi_{Q^*}:Q^*\to G(Q^*)$ satisfy the equation $$\theta \circ \varphi_Q(x)=\varphi_{Q^*}(x)^{-1}.$$ Hence $\varphi_Q$ is injective if and only if $\varphi_{Q^*}$ is.
\end{proof}
It follows that to provide two non isomorphic finite injective quandles with isomorphic enveloping groups and hence isomorphic representation categories one can exhibit a finite injective quandle not isomorphic to its dual quandle. 

\begin{example}
Let $A$ be an abelian group and $t$ be a natural number prime to $\vert A\vert$. The Alexander quandle $Al(A,t)$ is the set $A$ equipped with the binary operation
$$x\triangleright y= (1-t)x+t y$$
for $x,y$ in $A$. $Alex(A,t)$ is an injective quandle $($\cite{ak}$)$. The dual of $Al(\mathbb{Z}/5\mathbb{Z},2)$ is the quandle $Al(\mathbb{Z}/5\mathbb{Z},3)$. The quandles $Al(\mathbb{Z}/5\mathbb{Z},2)$ and $Al(\mathbb{Z}/5\mathbb{Z},3)$ are not isomorphic $($\cite{ho}$)$. More generally one can consider $Al(\mathbb{Z}/pZ,t)$ and its dual with $p$ prime and $t^2$ not equal to $1$ modulo $p$.
\end{example}

 \end{document}